\theoremstyle{theorem}
\newtheorem{theorem}{Theorem}[section]
\newtheorem{lemma}[theorem]{Lemma}
\newtheorem{definition}{Definition}[section]
\numberwithin{equation}{section}
\title{On linear equations arising in Combinatorics (Part III)}
\author{Masood Aryapoor\footnote{E-mail: aryapoor2002@yahoo.com}}
\date{}
\begin{document}

\maketitle




\begin{section}{Introduction}

In the first two papers \cite{Ar1,Ar2} the author embarked on a study of classes of linear equations over integers satisfying a ''Farkas-type" property. 
As the third paper in this study, the present paper deals with another class of linear 
equations over integers that has a similar "Farkas-type" property. Furthermore it is shown that if an arbitrary system of equations over integers satisfies the conditions imposed by Farkas' lemma then it has 
rational solutions of a special type (Theorem \ref{near integers}).

\end{section}
 

\begin{section}{Class $\mathcal{E}_{n}$}

In the first paper \cite{Ar1}, it is shown that if a system of linear equations has a suitable property then the existence of an integral solution 
is decided by a certain set of inequalities (Theorem 3.1 in \cite{Ar1}). In this part, a similar result is presented for another class of linear equations over integers. 
 
\begin{subsection}{Preliminaries}

Let $v=(v_1,...,v_n)\in \mathbb{Q}^n$ and let $s$ be the number of nonzero components of $v$. We want to define  a linear map $L_v:\mathbb{Q}^n\to \mathbb{Q}^{{s \choose 2}+n-s}$ depending on $v$. 
To present a notationally simpler definition, we assume
that $v_{s+1}=...=v_{n}=0$. The linear map $L_v:\mathbb{Q}^n\to \mathbb{Q}^{{s \choose 2}+n-s}$ is defined by the following rule 
\begin{equation} \label{linear map} L_v(t_1,...,t_n)=(\frac{t_1}{v_1}-\frac{t_2}{v_2},...,\frac{t_i}{v_i}-\frac{t_j}{v_j},...,\frac{t_{s-1}}{v_{s-1}}-\frac{t_{s}}{v_{s}},t_{s+1},...,t_n)\end{equation}

Given two elements $v=(v_1,...,v_n),w=(w_1,...,w_n)\in \mathbb{Z}^n$, we write $v|w$ if $w_i$ is divisible by $v_i$ whenever $v_i\neq 0$.
Let $A$ be an abelian subgroup of $\mathbb{Z}^n$ and let $v\in A$ be an element of $A$ such that $v|w$ for all $w\in A$ . Then it is easy to see that 
$L_v(A)\subset \mathbb{Z}^{{s \choose 2}+n-s}$ is a subgroup of $\mathbb{Z}^{{s \choose 2}+n-s}$ and $L_v:A\to\mathbb{Z}^{{s \choose 2}+n-s}$ defines a  $\mathbb{Z}$-linear map. Furthermore 
 the kernel of $L_v$ is $\mathbb{Z} v$ which in particular implies that the rank of $L_v(A)$ (as an abelian group) is equal to (rank of $A$)$-1$.

We inductively define the notion of a mod-linear function $l:\mathbb{Z}^n\to \mathbb{Z}$ of order $\leq r$ where $r$ is a nonnegative integer.
 A mod-linear function of order $\leq 0$ is just one of the projection maps $P_i:\mathbb{Z}^n\to \mathbb{Z}$, $P_i(x_1,...,x_n)=x_i$. When $r>0$, a function $l:\mathbb{Z}^n\to \mathbb{Z}$ is called a mod-linear function 
of order $\leq r$ if there exist mod-linear functions $l_1,l_2:\mathbb{Z}^n\to \mathbb{Z}$ of order $\leq r-1$ and nonzero integers $m_1,m_2$ such that 
$$l(x_1,...,x_n)=\lfloor \frac{l_1(x_1,...,x_n)}{m_1} \rfloor-\lceil \frac{l_2(x_1,...,x_n)}{m_2} \rceil$$
for every $(x_1,...,x_n)\in \mathbb{Z}$. Here, the notations  $\lfloor x \rfloor$ and $\lceil x \rceil$ denote the floor and ceiling functions respectively.

Finally we define inductively a subset $\mathcal{E}_{n}$ of the set of abelian subgroups of $\mathbb{Z}^n$ as follows.   
A nonzero abelian group $A\subset \mathbb{Z}^n$ belongs to $\mathcal{E}_{n}$ if and only if  
there exists a nonzero vector  $v=(v_1,...,v_n)\in A$ satisfying the following properties: (1) $v|w$ for all $w\in A$, and
(2) $L_v(A)=\{0\}$ or $L_v(A)\in \mathcal{E}_{{s \choose 2}+n-s}$ where $L_v$ is defined via \ref{linear map}.

\end{subsection}


\begin{subsection}{A Farkas-type result for $\mathcal{E}_{n}$}  

The following theorem can be considered as a generalization of Theorem 3.1 in \cite{Ar1}.

\begin{theorem}\label{inequalities E}

For every $A\in \mathcal{E}_{n}$ of rank $r$, there exists a finite set $E$, consisting of mod-linear functions $l:\mathbb{Z}^{2n}\to \mathbb{Z}$ of order $\leq r$, for which the following statement holds: For arbitrary integers $a_1\leq b_1,...,a_n\leq b_n$, 
there exists an element $(x_1,...,x_n)\in A$ such that $a_1\leq x_1\leq b_1,...,a_n\leq x_n\leq b_n$ if and only if for every  $l\in E$
we have $0\leq l(a_1,b_1,...,a_n,b_n)$.

\end{theorem}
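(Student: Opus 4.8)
The plan is to induct on the rank $r$ of $A$, proving the statement for all ambient dimensions at once, so that the inductive hypothesis is available for every group in every $\mathcal{E}_{M}$ of rank $<r$. For the base case $r=1$, condition (2) forces $L_v(A)=\{0\}$, hence $A=\mathbb{Z}v$, and feasibility of the box amounts to the existence of an integer $k$ with $a_i\le kv_i\le b_i$ for all $i$. For $i$ with $v_i\neq 0$ this confines $k$ to an integer interval with endpoints $\lceil a_i/v_i\rceil$ and $\lfloor b_i/v_i\rfloor$ (the endpoints and the roles of $a_i,b_i$ are swapped when $v_i<0$), while $v_i=0$ forces $a_i\le 0\le b_i$. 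A common $k$ exists iff every such interval is nonempty and every pair of them overlaps, which in dimension one reduces to the pairwise inequalities $\lceil a_i/v_i\rceil\le\lfloor b_j/v_j\rfloor$. Each is of the form $0\le\lfloor\,\cdot\,/m_1\rfloor-\lceil\,\cdot\,/m_2\rceil$, a mod-linear function of order $\le 1$, while a sign condition such as $a_i\le 0$ is realized as $0\le\lfloor a_i/(-1)\rfloor-\lceil a_i/1\rceil=-2a_i$. This last point records the principle that drives the whole argument: we only ever need a mod-linear function with the correct \emph{sign set}, never one equal to a prescribed form.

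For the inductive step I fix $A\in\mathcal{E}_n$ of rank $r\ge 2$ and choose $v$ as in the definition, so that $L_v(A)\in\mathcal{E}_{\binom{s}{2}+n-s}$ has rank $r-1$. Writing $u_i=x_i/v_i\in\mathbb{Z}$ for $i\le s$ (legitimate since $v\mid x$), the box constraints become $\alpha_i\le u_i\le\beta_i$ with $\alpha_i=\lceil a_i/v_i\rceil$, $\beta_i=\lfloor b_i/v_i\rfloor$ for $i\le s$, together with $a_i\le x_i\le b_i$ for $i>s$. Because $\ker L_v=\mathbb{Z}v$, a point of $A$ is determined by its image $z=L_v(x)$ up to a simultaneous shift $u_i\mapsto u_i+k$; the one-dimensional overlap argument of the base case then shows that, for a fixed $z\in L_v(A)$, some preimage lies in the box iff $\alpha_i\le\beta_i$ for all $i\le s$ and the coordinates of $z$ satisfy $\alpha_i-\beta_j\le d_{ij}\le\beta_i-\alpha_j$ on the difference coordinates and $a_i\le c_i\le b_i$ on the coordinates $i>s$. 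Since the conditions $\alpha_i\le\beta_i$ do not depend on $z$, this yields the clean reduction: \emph{$A$ meets the box iff $\alpha_i\le\beta_i$ for every $i\le s$ and $L_v(A)$ meets the box $B'$} whose coordinate intervals are exactly the ones just displayed.

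I would then invoke the inductive hypothesis for $L_v(A)$ to get a finite set $E'$ of mod-linear functions of order $\le r-1$ testing nonemptiness of $B'$, and assemble $E$ by substituting the endpoint-functions of $B'$ into each $l'\in E'$ and adjoining the order-$\le 1$ functions $\beta_i-\alpha_i$. The upper endpoints $\beta_i-\alpha_j=\lfloor b_i/v_i\rfloor-\lceil a_j/v_j\rceil$ are themselves mod-linear of order $\le 1$, and a routine substitution lemma proved by induction on the order of $l'$ (substituting order-$\le 1$ functions into the leaves of an order-$\le q$ function gives an order-$\le q+1$ function, since the top floor/ceiling node only adds one to the order of its mod-linear arguments) delivers the required bound $\le r$. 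Finiteness of $E$ is immediate.

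The hard part, and the point at which the restrictive definition of mod-linearity bites, is the \emph{lower} endpoints: $\alpha_i-\beta_j=\lceil a_i/v_i\rceil-\lfloor b_j/v_j\rfloor$ is the \emph{negative} of a mod-linear function, and mod-linear functions are not closed under negation (already $-x_i$ is not of order $\le 0$). The resolution I have in mind is that negating the value fed into a leaf is free at every node of order $\ge 1$: if a leaf enters through $\lfloor\,\cdot\,/m\rfloor$ or $\lceil\,\cdot\,/m\rceil$, replacing it by $-g$ is the same as replacing it by $g$ and flipping the sign of the modulus $m$, which leaves the order count untouched. Thus the sign attached to each lower endpoint is absorbed into the modulus at the node immediately above the corresponding leaf of $l'$, and the substitution lemma goes through with these sign flips. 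The only exceptional configuration is a bare projection $l'=P_k$ in $E'$ reading a lower-endpoint slot; such a condition would assert that a coordinate's lower bound be nonnegative, which is not a genuine feasibility constraint for a subgroup, so it is absent from a minimal $E'$, and if retained its top-level nonnegativity can be re-expressed with the correct sign set exactly as in the base case. Granting this, every function in $E$ is mod-linear of order $\le r$, completing the induction.
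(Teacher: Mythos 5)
Your proof is correct, and its skeleton is the same as the paper's: induction on the rank, the rank-one case settled by pairwise overlap of integer intervals, and the inductive step reducing box-feasibility for $A$ to box-feasibility for $L_v(A)$, followed by substituting the order-$\le 1$ endpoint functions into the test set $E'$ supplied by the inductive hypothesis. You diverge in two places, both to your advantage. First, you prove the reduction (the paper's Lemma~\ref{dimension reduction}) by observing that the fiber of $L_v$ restricted to $A$ over any $z\in L_v(A)$ is a single coset $x+\mathbb{Z}v$, so feasibility within a fiber is a one-dimensional overlap question; the paper instead constructs a splitting $A=B\oplus\mathbb{Z}v$, which costs it a separate argument that $A/\mathbb{Z}v$ is torsion-free. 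Your coset argument makes that splitting (and the torsion-freeness claim) unnecessary, since $\ker(L_v|_A)=\mathbb{Z}v$ already parametrizes the fiber. Second, you make explicit the sign problem that the paper only gestures at: the lower endpoints $\alpha_i-\beta_j$ of the reduced box are \emph{negatives} of mod-linear functions (the paper tacitly acknowledges this by asserting that the $(-a_{ij})$'s, not the $a_{ij}$'s, are mod-linear of order $\le 1$, and then says ``one can easily show'' that the composites have order $\le r$). Your resolution --- absorb the sign into the modulus via $\lfloor x/m\rfloor=\lfloor -x/(-m)\rfloor$ at any node of positive order, and, for a bare projection in $E'$ reading a lower-endpoint slot, replace the condition $0\le -g$ by $0\le \lfloor g/(-1)\rfloor-\lceil g/1\rceil=-2g$, of order $\le 2\le r$ --- is exactly the missing bookkeeping; note that your alternative hedge that such a projection ``is absent from a minimal $E'$'' is unsupported and should simply be dropped in favor of the replacement, which always works. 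A final small point in your favor: for coordinates with $v_i=0$ the correct test is $a_i\le 0\le b_i$, which you encode as $0\le b_i$ and $0\le -2a_i$; the paper's base case lists $0\le b_i-a_i$ here, which is vacuous under the standing hypothesis $a_i\le b_i$, so your version also repairs what appears to be a slip in the paper's list of inequalities.
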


\begin{proof}

This is proved by induction on $r$. First suppose $r=1$. 
Then there exists an element $v=(v_1,...,v_n)\in A$ such that $A=\mathbb{Z} v$. 
Without loss of generality, we may assume that $v_1,...,v_q>0$, $v_{q+1},...,v_s<0$ and $v_{s+1}=\cdots=v_{n}=0$.
It is obvious that there exists an element $(x_1,...,x_n)\in A$ such that $a_1\leq x_1\leq b_1,...,a_n\leq x_n\leq b_n$, if and only if there exists an integer
$t$ such that $a_1\leq t v_1\leq b_1,...,a_n\leq t v_n\leq b_n$, or equivalently
$$\frac{a_1}{v_1}\leq t \leq \frac{b_1}{v_1},...,\frac{a_q}{v_q}\leq t \leq \frac{b_q}{v_q},$$
$$\frac{b_{q+1}}{v_{q+1}}\leq t \leq \frac{a_{q+1}}{v_{q+1}},...,\frac{b_{s}}{v_{s}}\leq t \leq \frac{a_{s}}{v_{s}},$$
$$a_{s+1}\leq 0\leq b_{s+1},...,a_{n}\leq 0\leq b_{n}.$$
It is easy to see that these inequalities have a common solution $t\in \mathbb{Z}$ if and only if the following conditions hold

$$ 0\leq \lfloor \frac{b_j}{v_j} \rfloor - \lceil \frac{a_i}{v_i} \rceil \quad  \text{when} \quad 1\leq i, j\leq q,$$
$$ 0\leq \lfloor \frac{a_j}{v_j} \rfloor - \lceil \frac{b_i}{v_i} \rceil\quad  \text{when} \quad q< i, j\leq s,$$
$$ 0\leq \lfloor \frac{a_j}{v_j} \rfloor - \lceil \frac{a_i}{v_i} \rceil\quad  \text{when} \quad 1\leq i\leq q<j\leq s,$$
$$ 0\leq \lfloor  \frac{b_i}{v_i} \rfloor- \lceil \frac{b_j}{v_j} \rceil\quad  \text{when} \quad 1\leq i\leq q<j\leq s,$$
$$ 0\leq b_{s+1}-a_{s+1},...,0\leq b_{n}-a_{n}.$$
Using these inequalities, one can easily construct a desired set $E$ for $A$.   

Now suppose $r>1$. Since $A\in \mathcal{E}_{n}$, there exists a nonzero 
element $v=(v_1,...,v_n)\in A$ satisfying the following properties: (1) $v|w$ for every $w\in A$, and
(2) $L_v(A)\in \mathcal{E}_{{s \choose 2}+n-s}$, where $s$ is the number of nonzero components of $v$. 
I claim  that there exists a subgroup $B$ of $A$ such that
$A=B\oplus \mathbb{Z} v$. It is known that such a subgroup $B$ exists if and only if the ableian group $A/\mathbb{Z} v$ is
torsion-free, i.e. if $m w\in \mathbb{Z} v $ for a nonzero element $w\in A$ and a nonzero integer $m$ then $w\in \mathbb{Z} v$. Suppose such an element $w=(w_1,...,w_n)$ and an integer $m$ exist. 
There is nothing to prove if $w=0$. So let $w\neq 0$. There exists a nonzero integer $b$ such that $m w=b v$. 
Since $m\neq 0$, we see that $w_i=0$ if and only if $v_i=0$ for all $i=1,...,n$. From $v|w$, it follows that for all $i$ such that $v_i\neq 0$, we have $w_i=b_i v_i$ where $b_i$ is an integer. 
Therefore we have $m b_i v_i= m w_i =b v_i$, implying $m b_i=b$. It follows that $b$ is divisible by $m$ and consequently $w=\frac{b}{m}v\in \mathbb{Z} v$. The proof  of the claim is complete.

Without loss of generality, we may assume that $v_1,...,v_{q}>0$, $v_{q+1},...,v_{s}<0$ and $v_{s+1}=\cdots=v_{n}=0$. 
Every $x\in A$ can be written as $x=t v+y$ where $t\in \mathbb{Z}$ and $y\in B$. 
It follows that there exists an element $x=(x_1,...,x_n)\in A$ such that $a_1\leq x_1\leq b_1,...,a_n\leq x_n\leq b_n$, if and only if there exist an integer 
$t$ and an element $y=(y_1,...,y_n)\in B$ such that $a_1\leq t v_1+y_1\leq b_1,...,a_n\leq t v_n+y_n\leq b_n$, or equivalently
$$\frac{a_1-y_1}{v_1}\leq t \leq \frac{b_1-y_1}{v_1},...,\frac{a_{q}-y_{q}}{v_{q}}\leq t \leq \frac{b_{q}-y_{q}}{v_{q}},$$
$$\frac{b_{q+1}-y_{q+1}}{v_{q+1}}\leq t \leq \frac{a_{q+1}-y_{q+1}}{v_{q+1}},...,\frac{b_{s}-y_{s}}{v_{s}}\leq t \leq \frac{a_{s}-y_{s}}{v_{s}},$$
$$a_{s+1}\leq y_{s+1}\leq b_{s+1},...,a_{n}\leq y_n\leq b_{n}.$$
One can easily see that these equations  have a common solution $t\in \mathbb{Z}$ if and only if  the following inequalities hold
$$ 0\leq \lfloor \frac{b_j-y_j}{v_j} \rfloor - \lceil \frac{a_i-y_i}{v_i} \rceil \quad  \text{when} \quad 1\leq i, j\leq q,$$
$$ 0\leq \lfloor \frac{a_j-y_j}{v_j}\rfloor - \lceil \frac{b_i-y_i}{v_i} \rceil\quad  \text{when} \quad q< i, j\leq s,$$
$$ 0\leq \lfloor \frac{a_j-y_j}{v_j} \rfloor - \lceil \frac{a_i-y_i}{v_i} \rceil\quad  \text{when} \quad 1\leq i\leq q<j\leq s,$$
$$ 0\leq \lfloor \frac{b_i-y_i}{v_i}  \rfloor- \lceil\frac{b_j-y_j}{v_j}  \rceil\quad  \text{when} \quad 1\leq i\leq q<j\leq s,$$
$$a_{s+1}\leq y_{s+1}\leq b_{s+1},...,a_{n}\leq y_n\leq b_{n}.$$
Using the fact that $v|w$ for all $w\in A$, one can show that these conditions are equivalent to  
the following conditions: \\
(1) For all $1\leq i<j\leq q$, we have
$$ \lceil \frac{a_i}{v_i} \rceil - \lfloor \frac{b_j}{v_j} \rfloor \leq \frac{y_i}{v_i}-\frac{y_j}{v_j}\leq \lfloor\frac{b_i}{v_i} \rfloor - \lceil \frac{a_j}{v_j} \rceil $$
($1'$) For all $1\leq i \leq q$, we have
$$0\leq \lfloor \frac{b_i}{v_i} \rfloor - \lceil \frac{a_i}{v_i} \rceil$$
(2) For all $q< i<j\leq s$, we have
$$ \lceil \frac{b_i}{v_i} \rceil - \lfloor \frac{a_j}{v_j} \rfloor \leq \frac{y_i}{v_i}-\frac{y_j}{v_j}\leq  \lfloor \frac{a_i}{v_i}\rfloor - \lceil\frac{b_j}{v_j}  \rceil$$
($2'$) For all $q< j\leq s$, we have
$$0\leq \lfloor \frac{a_j}{v_j} \rfloor - \lceil \frac{b_j}{v_j} \rceil$$
(3) For all $1\leq i\leq q<j\leq s$, we have
$$  \lceil \frac{a_i}{v_i} \rceil - \lfloor \frac{a_j}{v_j} \rfloor \leq \frac{y_i}{v_i}-\frac{y_j}{v_j}\leq \lfloor\frac{b_i}{v_i}  \rfloor- \lceil  \frac{b_j}{v_j} \rceil $$
(4) $$ a_{s+1}\leq y_{s+1}\leq b_{s+1},...,a_{n}\leq y_n\leq b_{n}.$$
Put
\[ a_{i j} = \left\{ 
 \begin{array}{l l}
\lceil \frac{a_i}{v_i} \rceil - \lfloor \frac{b_j}{v_j} \rfloor  & \quad \text{when} \quad 1\leq i< j\leq q\\
\lceil \frac{b_i}{v_i} \rceil - \lfloor \frac{a_j}{v_j} \rfloor & \quad \text{when} \quad q< i< j\leq s\\
\lceil \frac{a_i}{v_i} \rceil - \lfloor \frac{a_j}{v_j} \rfloor & \quad \text{when} \quad 1\leq i\leq q<j\leq s\\
 \end{array} \right.\]   
and
\[ b_{i j} = \left\{ 
 \begin{array}{l l}
\lfloor\frac{b_i}{v_i} \rfloor - \lceil \frac{a_j}{v_j} \rceil & \quad \text{when} \quad 1\leq i< j\leq q\\
\lfloor \frac{a_i}{v_i}\rfloor - \lceil\frac{b_j}{v_j}  \rceil& \quad \text{when} \quad q< i< j\leq s\\
\lfloor\frac{b_i}{v_i}  \rfloor- \lceil  \frac{b_j}{v_j} \rceil  & \quad \text{when} \quad 1\leq i\leq q<j\leq s.\\
 \end{array} \right.\]   
To complete the proof we need the following lemma. 

\begin{lemma}\label{dimension reduction}

There exists an element $x=(x_1,...,x_n)\in A$ such that $a_1\leq x_1\leq b_1,...,a_n\leq x_n\leq b_n$ if and only if 
for all $1\leq i \leq q$ we have $0\leq \lfloor \frac{b_i}{v_i} \rfloor - \lceil \frac{a_i}{v_i} \rceil$,  for all $q< j\leq s$ we have
$0\leq \lfloor \frac{a_j}{v_j} \rfloor - \lceil \frac{b_j}{v_j} \rceil$, and 
there exists an element  
$(z_{12},...,z_{(s-1)s},z_{1},...,z_{n-s})\in L_{v}(A)$ such that $a_{i j}\leq z_{i j}\leq b_{i j}$ for all $1\leq i<j\leq s$ and $a_{s+1} \leq z_{1}\leq b_{s+1},...,a_n\leq z_{n-s}\leq b_n$.

\end{lemma} 

\begin{proof}

First suppose there exists an element $x=(x_1,...,x_n)\in A$ such that $a_1\leq x_1\leq b_1,...,a_n\leq x_n\leq b_n$. Since $A=B\oplus \mathbb{Z}v$, there exist an integer 
$t$ and an element  $(y_1,...,y_n)\in B$ such that $x_1=y_1+ t v_1,...,x_n=y_n+ t v_n$. As shown above, it follows that $y_1,...,y_n$ satisfy 
Inequalities (1), ($1'$), (2), ($2'$), (3), and (4) above, which in particular implies that the vector 
$L_v(x)\in L_{v}(A)$ satisfies the desired conditions in the lemma and we are done.  

Conversely, suppose there exists an element 
$(z_{12},...,z_{(s-1)s},z_{1},...,z_{n-s})\in L_{v}(A)$ such that $a_{i j}\leq z_{i j}\leq b_{i j}$ for all $1\leq i<j\leq s$ and $a_{s+1} \leq z_{1}\leq b_{s+1},...,a_n\leq z_{n-s}\leq b_n$.
Since $L_v(B)=L_v(A)$, there exists an element $(y_1,...,y_n)\in B$  such that  $L_v(y_1,...,y_n)=(z_{12},...,z_{(s-1)s},z_{1},...,z_{n-s})$. It follows that $y_1,...,y_n$ 
satisfy Inequalities (1), (2), (3), and (4) above. Furthermore, by assumption, Inequalities ($1'$) and ($2'$) hold. As shown above, it follows that there exists  
an element $(x_1,...,x_n)\in A$ such that $a_1\leq x_1\leq b_1,...,a_n\leq x_n\leq b_n$. 

\end{proof}

The group $L_{v}(A)$ has a smaller rank than the group $A$ and $L_{v}(A)\in \mathcal{E}_{{s \choose 2}+n-s}$. By induction, there exists  a finite set $E'$ of mod-linear functions for $A$ which satisfy the corresponding conditions. Since $(-a_{i j})$'s  and $b_{i j}$'s are mod-linear functions (of order $\leq 1$) in terms of $a_i$'s and $b_j$'s 
one can easily show that each element of $E'$ gives rise to a mod-linear function (of order $\leq r$) in terms of $a_i$'s and $b_j$'s. Let $L(E')$ be the set of 
such mod-linear functions in terms of $a_i$'s and $b_j$'s.  Inequalities ($1'$) and ($2'$) give rise to a finite set $E''$ consisting of mod-linear functions of order $\leq 1$. Using Lemma \ref{dimension reduction}, 
one can  easily see that the set $E=L(E')\cup E''$ satisfies the desired condition in the lemma and therefore the proof is complete. 

\end{proof}

\end{subsection}


\end{section}


\begin{section}{Rational solutions of special types}

In this part, it is shown that if a system of linear equations over integers has a rational solution in some interval then it has rational solutions of a particular type in the same interval. 

\begin{definition}

Let  $v_1,...,v_m\in  \mathbb{Q}^n$ be arbitrary vectors. Depending on $v_1,...,v_m$, the set $P_{v_1,...,v_m}$ is defined to be the set of primes $p$ for which there exists 
an integral elementary relation $\sum_{i=1}^m a_i v_i=0$ such that $p|\prod_{a_i\neq 0} a_i$. 

\end{definition}

It is known that there exist only finitely many elementary integral relations among $v_1,...,v_m$ (see \cite{RO}). This implies that the set 
$P_{v_1,...,v_m}$  is a finite (possibly empty) set. Given a set of primes $P$, let $ \mathbb{Q}_P$ denote the following ring 
$$ \mathbb{Q}_P=\{\frac{a}{b}|a,b\in\mathbb{Z}, b\neq 0 \text{ and all prime factors of } b \text{ belong to } P\}.$$

\begin{lemma}\label{k}

Let $v_1,...,v_m\in \mathbb{Q}^n$ be given and put $P=P_{v_1,...,v_m}$. Let  $w\in  \sum_{i= 1}^m \mathbb{Q}_P v_i$ and assume that 
there exists a natural number $k$, with no prime factors in $P$, and a set $I\subset \{1,...,m\}$ such that $k w\in \sum_{i\in I} \mathbb{Q}_P v_i$.  
Then we have $w\in \sum_{i\in I} \mathbb{Q}_P v_i$. 

\end{lemma}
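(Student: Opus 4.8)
The plan is to reduce the whole statement to a single fact about how an individual vector lying in a rational span can be rewritten with coefficients in $\mathbb{Q}_P$, and then to run a descent on the size of the support of a representation of $w$.

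First I would record the cheap rational consequence of the hypotheses. Since $kw\in\sum_{i\in I}\mathbb{Q}_Pv_i\subseteq\sum_{i\in I}\mathbb{Q}v_i$ and $k\neq 0$, dividing by $k$ over $\mathbb{Q}$ gives $w\in\sum_{i\in I}\mathbb{Q}v_i$; that is, $w$ lies in the rational span of $\{v_i:i\in I\}$. (The assumption that $k$ has no prime factor in $P$ is not really used beyond $k\neq 0$: a prime of $P$ is a unit of $\mathbb{Q}_P$ and can be cancelled, so one may always normalise to $k$ coprime to $P$, which is exactly the substantive case.) The genuine content is then to upgrade this rational membership, together with the given $\mathbb{Q}_P$-representation of $w$, to a $\mathbb{Q}_P$-representation of $w$ that is supported on $I$.

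The key step is the following sublemma: if $v_j$ lies in the rational span of $\{v_i:i\in S\}$ for some $S\subseteq\{1,\dots,m\}$, then in fact $v_j\in\sum_{i\in S}\mathbb{Q}_Pv_i$. To prove it I would pick a minimal $S_0\subseteq S$ with $v_j$ still in the rational span of $\{v_i:i\in S_0\}$; minimality makes $\{v_i:i\in S_0\}$ independent and forces every coefficient to be nonzero, so the support-minimal integral relation on $\{j\}\cup S_0$ is an \emph{elementary} relation, say $\beta_jv_j+\sum_{i\in S_0}\beta_iv_i=0$ with all $\beta$'s nonzero integers. By the very definition of $P$, every prime dividing the product of the nonzero coefficients of an elementary relation lies in $P$; applied to this relation, every prime dividing the pivot $\beta_j$ lies in $P$, so $\beta_j$ is a unit of $\mathbb{Q}_P$. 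Hence $v_j=-\beta_j^{-1}\sum_{i\in S_0}\beta_iv_i$ has all coefficients in $\mathbb{Q}_P$ and support inside $S$, which is the claim. This is the conceptual heart of the argument and the one place where the definition of $P$ enters essentially; the only delicate bookkeeping is passing to a genuinely integral elementary relation and checking that the pivot coefficient becomes a unit of $\mathbb{Q}_P$.

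Finally I would deduce the lemma by descent. Among all representations $w=\sum_{i=1}^m c_iv_i$ with $c_i\in\mathbb{Q}_P$ (which exist by hypothesis) choose one minimizing the number of nonzero coefficients $c_j$ with $j\notin I$, and call that index set $T$. If $T\neq\emptyset$, fix $j_0\in T$; since $\sum_{j\in T}c_jv_j=w-\sum_{i\in I}c_iv_i$ lies in the rational span of $\{v_i:i\in I\}$, isolating the $j_0$ term shows $v_{j_0}$ lies in the rational span of $\{v_i:i\in I\cup(T\setminus\{j_0\})\}$. Applying the sublemma with $S=I\cup(T\setminus\{j_0\})$ rewrites $c_{j_0}v_{j_0}$ with $\mathbb{Q}_P$-coefficients supported on $S$, and substituting yields a new $\mathbb{Q}_P$-representation of $w$ whose support outside $I$ is contained in $T\setminus\{j_0\}$, contradicting minimality. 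Hence $T=\emptyset$ and $w=\sum_{i\in I}c_iv_i\in\sum_{i\in I}\mathbb{Q}_Pv_i$. The descent itself is routine linear algebra, so the real work is concentrated in the sublemma.
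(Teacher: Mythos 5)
Your argument is correct, but it takes a genuinely different route from the paper's, and it in fact proves more. The paper inducts on $m-|I|$: assuming $m\notin I$, it writes $w=\sum_{i\in I\cup\{m\}}b_iv_i$ with $b_i\in\mathbb{Q}_P$, produces an elementary integral relation $\sum_{i\in J\cup\{m\}}a_iv_i=0$ with $J\subseteq I$, $\{v_j\}_{j\in J}$ independent and $a_m\neq 0$, and then uses the hypothesis on $k$ in an essential-looking way: since every prime factor of $a_m$ lies in $P$ while $k$ has none there, $\gcd(k,a_m)=1$, and a B\'ezout combination of $k(b_mv_m)\in\sum_{i\in I}\mathbb{Q}_Pv_i$ and $a_m(b_mv_m)\in\sum_{i\in I}\mathbb{Q}_Pv_i$ yields $b_mv_m\in\sum_{i\in I}\mathbb{Q}_Pv_i$. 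You instead isolate a single-vector sublemma and run a support descent, using $k$ only through $k\neq 0$. Both proofs turn on the same core fact --- a support-minimal relation over an independent set is elementary, so by the definition of $P$ all primes dividing its coefficients lie in $P$ --- but you exploit it by inverting the pivot $\beta_j$ inside $\mathbb{Q}_P$, while the paper exploits it only through coprimality with $k$. Your route is the leaner one, and it exposes that the arithmetic hypothesis on $k$ is redundant: even within the paper's own proof, $a_m$ is already a unit of $\mathbb{Q}_P$, so $v_m=-a_m^{-1}\sum_{j\in J}a_jv_j$ lies in $\sum_{i\in I}\mathbb{Q}_Pv_i$ directly and the B\'ezout step could be skipped. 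Concretely, your argument establishes the identity $\bigl(\sum_{i=1}^m\mathbb{Q}_Pv_i\bigr)\cap\mathrm{span}_{\mathbb{Q}}\{v_i: i\in I\}=\sum_{i\in I}\mathbb{Q}_Pv_i$, of which the stated lemma (for any nonzero $k$, coprime to $P$ or not) is an immediate consequence; what the paper's formulation buys is only that it matches the way the lemma is invoked later, where a specific $k$ prime to $P$ arises. The two points worth writing out fully in your sublemma are the trivial case $v_j=0$ (take $S_0=\emptyset$) and the check that the cleared-and-primitivized relation on $\{j\}\cup S_0$ has globally minimal support (a relation omitting $j$ contradicts independence of $\{v_i\}_{i\in S_0}$; one containing $j$ but omitting some $i_1\in S_0$ contradicts minimality of $S_0$) --- both routine, as you indicate.
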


\begin{proof}
  
The proof is by induction on $m-|I|$. There is nothing to prove  when $m-|I|=0$, so suppose $m-|I|>0$. 
Without loss of generality, we may assume that $m\notin I$. By induction on $m-|I|$, we have
$w\in  \sum_{i\in I\cup \{m\}} \mathbb{Q}_P v_i$, i.e.
$w=\sum_{i\in I\cup \{m\}} b_i v_i$, where $b_i\in  \mathbb{Q}_P$ ($i\in I\cup \{m\}$). If $b_m= 0$, then
we are done. So suppose $b_m\neq 0$. We have $k b_m v_m\in \sum_{i\in I}\mathbb{Q}_P v_i$. 
There exists a nonempty set $J\subset I$, such that the vectors $\{v_j\}_{j\in J}$ are linearly independent
and  $k b_m v_m\in  \sum_{i\in J}\mathbb{Q}_p v_i$. It follows that there exists an elementary integral relation  
$\sum_{i\in J\cup \{m\}} a_i v_i=0$. Since  
the vectors $\{v_j\}_{j\in J}$ are linearly independent, we have $a_m\neq 0$. 
Moreover $k$ and $a_m$ are relatively prime, by virtue of the assumption on $k$. Since  $k (b_m v_m), a_m (b_m v_m)\in \sum_{i\in I} \mathbb{Q}_P v_i$,  
we deduce that $b_m v_m\in \sum_{i\in I} \mathbb{Q}_P v_i$ which implies that 
$w\in \sum_{i\in I} \mathbb{Q}_P v_i$ because $w=\sum_{i\in I\cup \{m\}} b_i v_i$.

\end{proof}

\begin{theorem}

Let  $v_1,...,v_m\in \mathbb{Q}^n$ and $a_1\leq b_1,...,a_m\leq b_m$ be in $ \mathbb{Q}_P$ where $P=P_{v_1,...,v_m}$.
If a vector $w\in  \sum_{i=1}^m  \mathbb{Q}_P v_i$ can be written as
$w=\sum_{i=1}^m x_i v_i$ where  $a_1\leq  x_1\leq b_1,...,a_m\leq x_m\leq b_m$ are rational numbers, then
there exist numbers  $a_1\leq  y_1\leq b_1,...,a_m\leq y_m\leq b_m$ in $ \mathbb{Q}_P$ such that $w=\sum_{i=1}^m y_i v_i$.

\end{theorem}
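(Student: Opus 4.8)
The plan is to realize the set of admissible coefficient vectors as a bounded rational polytope and to pass to one of its vertices, whose special combinatorial structure, combined with Lemma \ref{k}, will force its coordinates to lie in $\mathbb{Q}_P$.

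First I would introduce the feasible region
$$S=\{(x_1,\dots,x_m)\in\mathbb{Q}^m : \textstyle\sum_{i=1}^m x_i v_i=w,\ a_i\le x_i\le b_i \text{ for all } i\}.$$
By hypothesis $S$ is nonempty, and since each coordinate is confined to $[a_i,b_i]$ the set $S$ is bounded; being cut out by rational linear equalities and inequalities, it is a nonempty bounded rational polyhedron and therefore has at least one vertex $y=(y_1,\dots,y_m)$, which is necessarily rational.

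Next I would exploit the structure of a vertex, viewed as a basic feasible solution of a bounded-variable linear system. There is a partition of $\{1,\dots,m\}$ into a basic set $I$ and its complement such that every nonbasic coordinate $y_i$ ($i\notin I$) equals one of its bounds $a_i$ or $b_i$, and the vectors $\{v_i\}_{i\in I}$ are linearly independent. In particular each nonbasic $y_i$ lies in $\mathbb{Q}_P$, and the basic coordinates are the unique solution of $\sum_{i\in I} y_i v_i = w'$, where $w':=w-\sum_{i\notin I} y_i v_i$. Because $w\in\sum_{i=1}^m\mathbb{Q}_P v_i$ and each $y_i v_i$ with $i\notin I$ lies in $\mathbb{Q}_P v_i$, we have $w'\in\sum_{i=1}^m \mathbb{Q}_P v_i$.

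The final and decisive step is to show that $y_i\in\mathbb{Q}_P$ for $i\in I$ as well, and this is exactly where Lemma \ref{k} enters. Writing each basic $y_i$ in lowest terms, let $k$ be the least common multiple of the parts of these denominators whose prime factors all lie outside $P$; then $k$ has no prime factor in $P$, while $k y_i\in\mathbb{Q}_P$ for every $i\in I$, so that $k w'=\sum_{i\in I}(k y_i)v_i\in\sum_{i\in I}\mathbb{Q}_P v_i$. Lemma \ref{k}, applied with this $k$ and the set $I$, yields $w'\in\sum_{i\in I}\mathbb{Q}_P v_i$; and since $\{v_i\}_{i\in I}$ are linearly independent, the representation of $w'$ over these vectors is unique, forcing each basic $y_i$ to lie in $\mathbb{Q}_P$. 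Hence $y\in\mathbb{Q}_P^m$ and $y\in S$, which is the assertion. I expect the main obstacle to be the clean identification of the vertex's basic/nonbasic structure, namely that nonbasic variables sit at their bounds and the basic columns are linearly independent, because it is precisely this linear independence that lets the denominator-clearing device and Lemma \ref{k} conclude that the specific vertex coordinates, and not merely some representation of $w'$, lie in $\mathbb{Q}_P$.
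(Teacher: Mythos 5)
Your proof is correct, and it takes a genuinely different route from the paper's. The paper argues by induction on $m$: it writes each $x_i=\frac{N_i}{k}$ with $k$ free of primes in $P$, then splits into two cases. If some coefficient already lies in $\mathbb{Q}_P$, it peels that vector off, applies Lemma \ref{k}, and invokes the induction hypothesis for $v_2,\dots,v_m$ (with an extra multiplier $M$ to bridge the gap between $P$ and the possibly smaller set $P'=P_{v_2,\dots,v_m}$); if no coefficient lies in $\mathbb{Q}_P$, all bounds are strict, and it perturbs the solution along an elementary integral relation $\sum_i c_i v_i=0$, using density of $\mathbb{Q}_P$ in $\mathbb{R}$ to push one coefficient into $\mathbb{Q}_P$ without violating any bound, thereby reducing to the first case; the case $P=\emptyset$ is not covered by this argument and is instead quoted from Theorem 3.3 of \cite{Ar1}. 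You replace the induction and the perturbation entirely by polyhedral theory: pick a vertex of the (nonempty, bounded, rational) feasible polytope, use the standard extreme-point characterization --- coordinates strictly between their bounds correspond to linearly independent columns, all other coordinates sit at bounds $a_i,b_i\in\mathbb{Q}_P$ --- and then a single application of Lemma \ref{k} (after clearing the non-$P$ part of the denominators) together with uniqueness of coordinates over a linearly independent set forces the basic coordinates into $\mathbb{Q}_P$. Both proofs pivot on exactly the same arithmetic engine, Lemma \ref{k}, where a denominator prime to $P$ is cancelled against elementary relations whose coefficients only involve primes of $P$. What your approach buys: it is shorter, avoids the density/perturbation step and the technical multiplier $M$, and it treats $P=\emptyset$ (where $\mathbb{Q}_P=\mathbb{Z}$) uniformly, so it actually reproves the cited Theorem 3.3 of \cite{Ar1} rather than depending on it. What the paper's approach buys: it is elementary and self-contained in that it uses no facts about extreme points of polyhedra, only arithmetic, density, and induction. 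The one point you flagged as an obstacle --- that nonbasic variables sit at their bounds while basic columns are independent --- is indeed the crux of your argument, but it is the standard characterization of extreme points of $\{x: Vx=w,\ a\le x\le b\}$ and poses no real difficulty.
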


\begin{proof}
 
In the case $P=\emptyset$, the theorem is proved in \cite{Ar1} (Theorem 3.3). In what follows we assume that $P\neq \emptyset$. 
  The proof is by induction on $m$.
First let $m=1$. Since $w\in \mathbb{Q}_P v_1$, we have $w=l v_1$ where  $l\in \mathbb{Q}_P$, implying that 
$w = x_1 v_1=l v_1$. If $v_1=0$, then $y_1=a_1$ satisfies the condition. If  $v_1\neq 0$, then $x_1=l$ and we are done. 
  
Now let $m>1$. If the vectors $v_1,...,v_m$ are  linearly independent, then from 
$w=\sum_{i=1}^m x_i v_i$ and $w\in  \sum_{i= 1}^m  \mathbb{Q}_P v_i$,  
it follows  that $x_1,...,x_m\in \mathbb{Q}_P $ and we are done. So we may assume that $v_1,...,v_m$ are  $\mathbb{Z}$-linearly dependent.
It is easy to see that there exists  a natural number $k$ with no prime factors in  
$P=P_{v_1,...,v_m}$ such that each $x_i$ can be written as $x_i=\frac{N_i}{k}$ where $N_i\in  \mathbb{Q}_P$.  
 We consider two cases.\\
\textit{Case 1}: Assume that there exists a coefficient, say $x_1$, which belongs to $\mathbb{Q}_P $. 
From $k(w-x_1 v_1)=\sum_{i=2}^m N_i v_i$ and Lemma \ref{k}, it follows that $w-x_1v_1\in \sum_{i= 2}^m\mathbb{Q}_P v_i$.
Set $P'=P_{v_2,...,v_m}$. It is clear that $P'\subset P$ and $ \mathbb{Q}_{P'}\subset \mathbb{Q}_P$ using which one can easily show that 
 there exists a natural number $M$ whose prime factors belong to $P\setminus P'$, such that $M (w-x_1v_1)\in \sum_{i= 2}^m\mathbb{Q}_{P'} v_i$. Considering 
the relation $M(w-x_1 v_1)=\sum_{i=2}^m (M x_i) v_i$, we see  that by induction there exists numbers  
 $M a_2\leq  y'_2\leq M b_2,...,M a_m\leq y'_m\leq M b_m$, all in $ \mathbb{Q}_{P'}$, such that 
 $M(w-x_1 v_1)=\sum_{i=2}^m y'_i v_i$.  
The presentation $w=x_1v_1+\sum_{i=2}^m \frac{y'_i}{M} v_i$ satisfies the desired conditions and we are done. \\
\textit{Case 2}: Assume that none of the coefficients $x_1,...,x_m$ belong to $\mathbb{Q}_P $.
In particular, we have $k a_i<N_i<k b_i$ for every $i=1,...,m$. 
Since  $v_1,...,v_m$ are  linearly dependent, there exists an elementary integral relation $\sum_{i=1}^m c_i v_i=0$. 
Without loss of generality, we may assume $c_1\neq 0$. One can easily prove that $\mathbb{Q}_P $ is dense in $\mathbb{R}$ when $P\neq \emptyset$.
Since $\mathbb{Q}_P$ is dense in $\mathbb{R}$ and $k a_i<N_i<k b_i$ for all $i=1,...,m$, one is able to find a rational number 
$r$ such that $k a_1\leq N_1+r c_1=k y_1\leq k b_1$, where $y_1\in \mathbb{Q}_P$ and  $k a_i\leq N_i+r c_i \leq k b_i$ for all $i=2,...,m$. Note that since 
$c_1$ is invertible in  $\mathbb{Q}_P$, we have $r\in  \mathbb{Q}_P$ which implies that $N_i+r c_i\in \mathbb{Q}_P$ for all  $i=1,2,...,m$. Now we have
$w=\sum_{i=1}^m \frac{N_i+r c_i}{k} v_i$ where $\frac{N_1+r c_1}{k}\in \mathbb{Q}_P$ and $k a_i\leq N_i+r c_i\leq k b_i$ for all $i=1,2,...,m$.
We can now use Case 1 to complete the proof.

\end{proof}

Using Farkas' lemma over $\mathbb{Q}$ (Theorem 2.4 in \cite{Ar1}), one can easily derive the following result.  

\begin{theorem}  \label{near integers}

Let  $v_1,...,v_m\in \mathbb{Q}^n$ and $a_1\leq b_1,...,a_m\leq b_m$ be in $ \mathbb{Q}_P$ where $P=P_{v_1,...,v_m}$.
Then a vector $w\in  \mathbb{Q}^n$ can be written as 
$w=\sum_{i=1}^m x_i v_i$ where $a_1\leq x_1\leq  b_1,...,a_m\leq x_m\leq b_m$  belong to $ \mathbb{Q}_P$  if and only if 
$w\in  \sum_{i=1}^m  \mathbb{Q}_P v_i$ and
$$(u,w)\leq \sum_{i=1}^m a_i\frac{(u,v_i)-|(u,v_i)|}{2}+\sum_{i=1}^m b_i\frac{(u,v_i)+|(u,v_i)|}{2},$$
for   every  $\{v_1,...,v_m\}$--indecomposable point $[u]\in \mathbb{R}\mathbb{P}_+^{n-1} $.

\end{theorem}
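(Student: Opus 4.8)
The plan is to combine the previous theorem (which characterizes when $w$ admits a representation with coefficients in the prescribed boxes using rational coefficients) with the rational Farkas lemma cited as Theorem 2.4 in \cite{Ar1}. The key observation is that the preceding theorem tells us that a $ \mathbb{Q}_P$-representation exists precisely when a \emph{rational} representation exists (given $w\in \sum_i \mathbb{Q}_P v_i$); so it suffices to characterize the existence of a rational representation $w=\sum_i x_i v_i$ with $a_i\leq x_i\leq b_i$ purely in terms of inequalities, and then observe that the $ \mathbb{Q}_P$ membership condition $w\in \sum_i \mathbb{Q}_P v_i$ can be appended as a separate clause.

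First I would establish the forward direction, which is routine. If $w=\sum_i y_i v_i$ with $y_i\in \mathbb{Q}_P$ and $a_i\leq y_i\leq b_i$, then certainly $w\in \sum_i \mathbb{Q}_P v_i$, and for any $u\in \mathbb{R}^n$ one computes $(u,w)=\sum_i y_i (u,v_i)$. Splitting each term according to the sign of $(u,v_i)$ and using $a_i\leq y_i\leq b_i$, each summand $y_i(u,v_i)$ is bounded above by $a_i\frac{(u,v_i)-|(u,v_i)|}{2}+b_i\frac{(u,v_i)+|(u,v_i)|}{2}$ (the expression simply picks $a_i$ when $(u,v_i)<0$ and $b_i$ when $(u,v_i)>0$). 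Summing yields the stated inequality; this holds for all $u$, in particular for the indecomposable ones.

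For the converse, the plan is to invoke Farkas' lemma over $\mathbb{Q}$ to produce a rational solution, and then upgrade it to a $ \mathbb{Q}_P$ solution via the previous theorem. The inequalities in the theorem statement, when written out over all $u$, are exactly the feasibility conditions for the linear system $w=\sum_i x_i v_i$, $a_i\leq x_i\leq b_i$; the role of restricting to $\{v_1,\dots,v_m\}$--indecomposable points $[u]\in \mathbb{R}\mathbb{P}_+^{n-1}$ is that Theorem 2.4 in \cite{Ar1} guarantees that feasibility is decided by this finite (indecomposable) subfamily rather than by all $u$. So the given inequalities, together with the compatibility hypothesis $a_i\leq b_i$, imply by the rational Farkas lemma that the polytope $\{x\in \mathbb{R}^m : a_i\leq x_i\leq b_i,\ \sum_i x_i v_i=w\}$ is nonempty, hence contains a rational point. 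Having such a rational representation with $a_i,b_i\in \mathbb{Q}_P$ and $w\in \sum_i \mathbb{Q}_P v_i$, I would then apply the preceding theorem verbatim to replace the rational coefficients $x_i$ by coefficients $y_i\in \mathbb{Q}_P$ lying in the same boxes $[a_i,b_i]$, completing the argument.

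The main obstacle I anticipate is purely a matter of correctly aligning the hypotheses of the rational Farkas lemma (Theorem 2.4) with the indecomposability condition: one must verify that the inequalities quantified only over indecomposable $[u]$ genuinely suffice to certify feasibility of the rational system, i.e. that no non-indecomposable direction imposes an independent constraint. This should follow because any $u$ decomposes as a nonnegative combination of indecomposable directions relative to $\{v_1,\dots,v_m\}$, so the corresponding inequality is a consequence of the indecomposable ones; but making this reduction precise is the delicate point, and it is exactly what Theorem 2.4 in \cite{Ar1} is designed to handle. Once that reduction is in hand, the remainder is a direct citation of the two established results.
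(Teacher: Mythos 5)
Your proposal is correct and follows exactly the route the paper intends: the paper's entire ``proof'' is the remark that the result follows from Farkas' lemma over $\mathbb{Q}$ (Theorem 2.4 in \cite{Ar1}), implicitly combined with the preceding theorem to upgrade the rational solution to a $\mathbb{Q}_P$ solution. Your write-up simply makes explicit the two-step argument (rational feasibility via the indecomposable-direction inequalities, then coefficient replacement) that the paper leaves to the reader.
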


\end{section}

\end{document}